\newcommand\norma[1]{\left\lVert#1\right\rVert}
\newcommand\modulo[1]{\left\lvert#1\right\rvert}
\newtheorem{df}{Definition}[section]
\newtheorem{prop}[df]{Proposition}
\newtheorem{teo}[df]{Theorem}
\newtheorem{lema}[df]{Lemma}
\title{The Bishop-Phelps-Bollobás property for operators defined on $c_0$-sum of Euclidean spaces}
\author{Thiago Grando, Mary Lilian Lourenço}
\date{ }
\begin{document}

\maketitle

\begin{abstract}

The main purpose of this paper is to study the Bishop-Phelps-Bollob\'as  property  for operators on $c_0$-sum of euclidean  spaces. We show that the pair $ (c_0\left(\bigoplus^{\infty}_{k=1}\ell^{k}_{2} \right),Y)$  has
 the Bishop-Phelps-Bollob\'as property  for operators (shortly BPBp for operators) whenever  $Y$ is a uniformly convex Banach space.
\end{abstract}

\

\textsc{Keywords:} Banach space, Bishop-Phelps-Bollob\'{a}s theorem, norm attaining operator.

\

\textsc{MSC 2020:} Primary 46B04; Secondary 46B07, 46B20.

\section{Introduction}

    In 1961, Bishop and Phelps \cite{BP}, proved that, for any Banach space, the subset of norm attaining functionals is dense in the 
topological dual space. This result is known as Bishop-Phelps theorem. These authors posed the problem of possible extensions of such a result to operators.

 In 1963, Lindenstrauss \cite{Lins}, started the study of extensions of Bishop-Phelps theorem for operators. In full generality there is no
 parallel version of Bishop-Phelps theorem for operators. Motivated by this result, there has been an effort of many authors to study 
some geometric conditions of the Banach spaces $X$ and $Y$ in order to get the Bishop-Phelps theorem for operators.

 In 1970, Bollob\'as \cite{Bol}, proved a ``quantitative" version of the Bishop-Phelps theorem, which stated that, every norm one functional and
 its almost norming points can be approximated by a norm attaining functional and its norm attaining point. 
The result is known nowadays as the Bishop-Phelps-Bollob\'as theorem. 

In 2008,  Acosta, Aron, Garcia and Maestre \cite{AAGM}, dealt with ``quantitative" 
versions of the Bishop-Phelps theorem for operators.
They defined a new notion for a pair of Banach spaces, which is called the Bishop-Phelps-Bollob\'as property for operators, and provided
many notable results. We recommend the surveys \cite{A1} and \cite{D} on the recent progress concerning the Bishop-Phelps-Bollob\'as property.
 
Many references in the field have appeared, among others, \cite{A}, \cite{AR}, \cite{AAGM}, \cite{AL8}, \cite{CK-sum}, \cite{CK}, \cite{CK-studia},
 \cite{CKLM}, \cite{GL}, \cite{Kim}.

 In \cite{AAGM} the authors showed that $(\ell^n_\infty, Y)$ satisfies the Bishop-Phelps-Bollob\'as property for operators for every $n\in \mathbb{N}$, whenever $Y$ is a
  uniformly convex Banach space. They also  raised the question if $(c_0, Y)$ satisfies the BPBp for operators, whenever $Y$ is a uniformly convex Banach space. 
In this sense, S. K. Kim  \cite{Kim}, answered the question in a positive way. 
 More generally, G. Choi and S. K. Kim proved, in \cite{CK-sum}, that ($ c_0\left(\bigoplus^{\infty}_{k=1}X \right), Y)$ has BPBp for operators, 
if  $X$ is uniformly convex Banach space and $Y$  is $\mathbb{C}$-uniformly convex Banach space. Every uniformly convex complex space is $\mathbb{C}$-uniformly convex but 
the converse is not true.

 The purpose of this paper is to show that ($c_0\left(\bigoplus^{\infty}_{k=1}\ell^{k}_{2} \right), Y)$ satisfies the BPBp for operators whenever
 $Y$ is a uniformly convex Banach space.  
 In this sense, we obtained a different result than the one in \cite{CK-sum}, since the $c_0$-sum is composed with different spaces. Notice that the Banach space $c_0\left(\bigoplus^{\infty}_{k=1}\ell^{k}_{2} \right)$, known as $c_0$-sum of the Euclidean $n$-spaces, is not 
isometric to $c_0$.  The importance of such a space is due to the fact that  C. Stegall, in {\cite{stegall}, showed that $\ell_\infty\left ( \bigoplus_{k=1}^\infty \ell_2^k \right )$ does not have the Dunford-Pettis 
property, but its predual, $\ell_1\left ( \bigoplus_{k=1}^\infty \ell_2^k \right )$  does.

Each $x\in c_0\left(\bigoplus^{\infty}_{k=1}\ell^{k}_{2} \right)$ can be represented by $x=\sum^\infty_{n=1}\sum_{k\in I(n)}x_ke_k$, where for 
every $n \in \mathbb{N}$, $I(n)=\{l\in \mathbb{N}:\, s(n-1)+1\leq l\leq s(n) \}$ with $s:\mathbb{N}_0\rightarrow\mathbb{N}_0$ the 
auxiliar function defined by  $s(n)=0$ if $n=0$ and $s(n)=1+2+\ldots +n$ if $n\neq 0$, and $(e_j)$ is the standard basis of
 $ c_0\left(\bigoplus^{\infty}_{k=1}\ell^{k}_{2} \right)$. The norm of $x$ is given by the formula $\norma{x}:=\sup_{n\in \mathbb{N}}\left(\sum_{k\in I(n)}\modulo{x_k}^2\right)^{1/2}$.

\section{Results}

It  will be convenient to recall the following notation. Let $X$ and $Y$ be Banach spaces (over the scalar field $\mathbb{K}=\mathbb{R}$ or $\mathbb{C}$). We denote by $S_X,$ $B_X$, $X^*$, and $\mathcal{L}(X, Y)$, the unit sphere, the closed unit ball, the topological dual space of $X$ and the space of all
bounded linear operators from $X$ into $Y$, respectively. An operator $T \in  \mathcal{L}(X, Y)$ is said to attain its norm at $x_0 \in S_X,$  if $\Vert T \Vert = \Vert T(x_0) \Vert $. Now, we recall a few definitions.

\begin{df}\cite[Definition 1.1]{AAGM}\label{def-BPBP-operadores}
Let $X$ and $Y$ be real or complex Banach spaces.  We say that the pair $(X,Y)$ has the {\it  Bishop-Phelps-Bollob\'{a}s property for operators} (shortly BPBp for operators)
 if given $\varepsilon > 0$, there are $ \eta
(\varepsilon)  > 0$ and $\beta(\varepsilon)>0$ with $\lim_{t\rightarrow 0}\beta(t)=0$ such that for all   $T \in 
S_{\mathcal{L}(X,Y)}$, if $x_0 \in S_{X}$  is such that $\Vert  T x_0 \Vert > 1- \eta
(\varepsilon )$, then there exist  a point   $u_0 \in S_{X}$ and an  operator   $S
\in S_{{\mathcal{L}(X,Y)}}$ that satisfy the following conditions:
$$
\Vert  Su_0 \Vert  =1, \quad  \Vert u_0- x_0 \Vert < \beta(\varepsilon) \quad \text{and} \quad \Vert
S-T \Vert < \varepsilon.
$$
\end{df}

 A Banach space $X$ is {\it uniformly convex} if for every $\varepsilon>0$ there is a $0<\delta<1$ such that for all $x,y\in B_X$ with $\norma{\frac{x+y}{2}}>1-\delta$, we have $\norma{x-y}<\varepsilon$. In this case, the modulus of convexity is given 
by $\delta(\varepsilon)=\inf \left\{1-\norma{\frac{x+y}{2}}: \, x,y\in B_X, \, \norma{x-y}\geq \varepsilon \right\}$. A Banach space $X$ is {\it strictly convex} if $\norma{\frac{x+y}{2}}<1$ whenever $x,y\in S_X$ and $x\neq y$. We remark that uniform convexity implies strict convexity, but the converse is not true.

We remark that in the following results, we  will use similar techniques as can be found in \cite{A}, \cite{AAGM}, \cite{AR}, \cite{CK-sum} and \cite{Kim}. We decided to
 include the proof of these results for the sake of completeness.

\begin{lema}\label{lema4} Let $Y$ be  a strictly convex Banach space and $ T: c_0\left(\bigoplus^{\infty}_{k=1}\ell^{k}_{2} \right)\rightarrow Y$ a bounded linear operator. If $\|T(x)\|=\|T\|$ for some norm one vector $x=\sum^{\infty}_{n=1}\sum_{k\in I(n)}x_ke_k$ , then 
\begin{eqnarray*}
T(e_j)=0 \quad \text{for all} \quad j\in I(k)\quad \text{if} \quad \sum_{i\in I(k)}\modulo{x_i}^2<1.
\end{eqnarray*}
\end{lema}

\begin{proof}
We can assume that $\norma{T}>0$, otherwise nothing is to be proven. Let $x=\sum^{\infty}_{n=1}\sum_{k\in I(n)}x_ke_k$ be an element of $S_{c_0\left( \oplus^{\infty}_{k=1}\ell^{k}_{2} \right)}$ such that $\|T(x)\|=\|T\|$. By the definition of $c_0$-sum, there is $k_0\in \mathbb{N}$ such that $\sum_{i\in \ I(k_0)}\modulo{x_i}^2<1$, where $I(k_0)=\{j, \ldots, l \}$.
Now, we argue by a contradiction, and assume  that  $T(e_{j})\neq0$. 

Let $v:=\left(x_j\pm\left(1-\left(\sum_{i\in I(k_0)}\modulo{x_i}^2\right)^{1/2}\right), x_{j+1},\ldots, x_l\right)\in \ell^{k_0}_2$. Then,
\begin{eqnarray*}
    \norma{v}_2&=&\norma{\left(x_j\pm\left(1-\left(\sum_{i\in I(k_0)}\modulo{x_i}^2\right)^{1/2}\right), x_{j+1},\ldots, x_l\right)}_2\\
    &\leq& \norma{\left(x_j, x_{j+1},\ldots, x_l\right)}_2+\norma{\left(1-\left(\sum_{i\in I(k_0)}\modulo{x_i}^2\right)^{1/2}, 0,\ldots, 0\right)}_2\\
    &=& \left(\sum_{i\in I(k_0)}\modulo{x_i}^2\right)^{1/2}+ 1-\left(\sum_{i\in I(k_0)}\modulo{x_i}^2\right)^{1/2}=1.
\end{eqnarray*}
This  implies that  
\begin{eqnarray*}
\norma{x\pm\left(1-\left(\sum_{i\in I(k_0)}\modulo{x_i}^2\right)^{1/2}\right)e_{j}}&=&\sup_{n\in \mathbb{N}}\left\{\left(\sum_{i\in I(n)\setminus I( k_0)}\modulo{x_i}^2\right)^{1/2}, \norma{v}_2 \right\}\\
&\leq& 1.
\end{eqnarray*}
By assumption $\norma{T(x)}=\norma{T}$, we have naturally  $\norma{T(2x)}=2 \norma{T},$
\begin{eqnarray*}
2\|T\|& \leq &\norma{T\left(x+\left(1-\left(\sum_{i\in I(k_0)}\modulo{x_i}^2\right)^{1/2}\right)e_{j}\right)}\\
&+&\norma{T\left(x-\left(1-\left(\sum_{i\in I(k_0)}\modulo{x_i}^2\right)^{1/2}\right)e_{j}\right)}\\
&\leq& 2\norma{T}.
\end{eqnarray*}
So, $\norma{T\left(x\pm\left(1-\left(\sum_{i\in I(k_0)}\modulo{x_i}^2\right)^{1/2}\right)e_{j}\right)}=\Vert T \Vert$, and \\
$ \frac{T\left(x\pm\left(1-\left(\sum_{i\in I(k_0)}\modulo{x_i}^2\right)^{1/2}\right)e_{j}\right)}{\|T\|} \in S_Y$. Finally, 
\begin{equation*}
 \left\|  \frac{\frac{T\left(x+\left(1-\left(\sum_{i\in I(k_0)}\modulo{x_i}^2\right)^{1/2}\right)e_{j}\right)}{\|T\|}+\frac{T\left(x-\left(1-\left(\sum_{i\in I(k_0)}\modulo{x_i}^2\right)^{1/2}\right)e_{j}\right)}{\|T\|}}{2}\right\|
\end{equation*}

\[=\left\| \frac{2T(x)}{2\|T\|}\right\|=1.
\]
Since $Y$ is strictly convex we get that $T(e_j)=0$. This is a contradiction, as we are assuming that $T(e_j)\neq 0$. By similar argument one shows that $T(e_i)=0$ for each $i\in I(k_0)$.
\end{proof}

Considering the real case, when $Y$ is strictly convex, we prove that if ($ c_0\left(\bigoplus^{\infty}_{k=1}\ell^k_2 \right), Y)$  satisfies BPBp for
 operators then $Y$ is uniformly convex.

\begin{teo}\label{ecec} Let $X$ be the real Banach space $c_0\left( \oplus^{\infty}_{k=1}\ell^{k}_{2} \right)$ and let $Y$ be a strictly convex real Banach space. If $\left(X,Y\right)$ has the BPBp for operators, then $Y$ is uniformly convex.
\end{teo}

\begin{proof} Suppose that $Y$ is not a uniformly convex Banach space.  Then there exists $\varepsilon>0$ and sequences $(y_k), (z_k)\subset S_{Y}$ such that
\begin{equation}\label{neqq}
\displaystyle \lim_{k\rightarrow \infty}\left\| \frac{y_k+z_k}{2}\right\|=1\quad  \text{and}\quad \|y_k-z_k\|>\varepsilon, \,\,  \forall k.
\end{equation}

 For each positive integer $i\in \mathbb{N}$, we define $T_i:X\longrightarrow Y$ by 

\begin{equation*}
T_i(x)=\left(\frac{x_1+x_2}{2}\right)y_i+\left(\frac{x_1-x_2}{2}\right)z_i, \quad x=(x_k)\in X.
\end{equation*}
 For each $i\in \mathbb{N}$ and each  $x\in S_{X}$ we have that
\begin{equation*}
\norma{T_i(x)}\leq \frac{1}{2}(\modulo{x_1+x_2}+\modulo{x_1-x_2})\leq 1.
\end{equation*}
As $ \left\|T_i(e_1+e_2)\right\|=1$, it follows that $\norma{T_i}=1$, for each $i \in \mathbb{N}$. We observe  that, for each $i\in \mathbb{N}$, $\left\| T_i(e_1)\right\|=\left\| \frac{y_i+z_i}{2} \right\|$, thus \label{neqq} $ \Vert T_i (e_1)\Vert$ converges to $1$ when $i \to \infty.$ 
 This fact, togheter with the hypothesis that $(X,Y)$ has the BPBp for operators, guarantees that there are $\eta(\varepsilon)>0$, $\beta(\varepsilon)>0$ with $\lim_{t\rightarrow 0}\beta(t)=0$,  $i_0\in \mathbb{N}$ such that $\|T_{i_0}(e_1)\|>1-\eta \left(\frac{\varepsilon}{2} \right)$, an operator $R\in S_{\mathcal{L}\left(X,Y\right)}$ and a point $u \in S_{X}$ such that 
\begin{equation}
\|R(u)\| =1, \,\,
\|u-e_1\|<\beta \left(\frac{\varepsilon}{2} \right)<1, \,\,
\|R-T_{i_0}\|<\frac{\varepsilon}{2}.
\end{equation}
Then $\left(\sum_{i\in I(k)}\modulo{u_i}^2\right)^{1/2}<1$ for all $k\in\mathbb{N}\setminus \{ 1\}$, and by Lemma \ref{lema4},  
\begin{eqnarray*}
R(e_k)=0,\quad \text{for all}\ \quad k\in\mathbb{N}\setminus \{ 1\}.
\end{eqnarray*}
Therefore, $R(e_1)=R(e_1+e_2)=R(e_1-e_2)$. This implies that
\begin{eqnarray*}
\|y_{i_0 }-z_{i_0}\|&=&\|T_{i_0}(e_1+e_2)-T_{i_0}(e_1-e_2)\|\\
&=&\|T_{i_0}(e_1+e_2)-R(e_1+e_2)+R(e_1-e_2)-T_{i_0}(e_1-e_2)\|\\
&\leq&\|T_{i_0}-R\|\|e_1+e_2\|+\|R-T_{i_0}\|\|e_1-e_2\|\\
&<&\frac{\varepsilon}{2}+\frac{\varepsilon}{2}=\varepsilon.
\end{eqnarray*}
This is a  contradiction, so  $Y$ is a uniformly convex Banach space.

\end{proof}

We need the next lemma to show the main result. In order to state it, let us recall that for $A\subset \mathbb{N}$ (resp. $A\subset \{ 1,\ldots, n\}$) and $X=c_0\left( \oplus^{\infty}_{k=1}\ell^{k}_{2} \right)$ (resp. $X=\ell_\infty\left(\bigoplus^{n}_{k=1}\ell^{k}_{2}\right)$), $P_A: X\rightarrow X$ is a projection on the components in $A$. 

\begin{lema}\label{lema3} Let $F \subset\mathbb{N}$ and $A=\bigcup_{i\in F}I(i)$. Suppose that $0<\varepsilon<1$ and $Y$ is a uniformly convex Banach space with modulus of convexity $\delta(\varepsilon)$. If $T\in S_{\mathcal{L}(c_0\left( \oplus^{\infty}_{k=1}\ell^{k}_{2} \right),Y)}$ satisfy that $\| TP_{A}\|>1-\delta(\varepsilon)$, then $\| T(I-P_{A})\|\leq \varepsilon$. Analogously, if $T\in S_{\mathcal{L}(\ell_\infty\left(\bigoplus^{n}_{k=1}\ell^{k}_{2}\right),Y)}$ and $A\subset \{1, \ldots, n \} $ satisfy $\| TP_{A}\|>1-\delta(\varepsilon)$, then $\| T(I-P_{A})\|\leq \varepsilon$.
\end{lema}
\begin{proof} Let $0<\varepsilon<1$ and $T\in S_{\mathcal{L}(c_0\left( \oplus^{\infty}_{k=1}\ell^{k}_{2} \right),Y)}$ an operator such that $\norma{TP_{A}}>1-\delta(\varepsilon)$. Then there exists $x\in S_{c_0\left( \oplus^{\infty}_{k=1}\ell^{k}_{2} \right)}\cap P_A(c_0\left( \oplus^{\infty}_{k=1}\ell^{k}_{2} \right)) $ such that $\| TP_{A}(x)\|>1-\delta(\varepsilon)$. Fix an element $y=\sum^{\infty}_{n=1}\sum_{k\in I(n)}y_ke_k\in B_{c_0\left( \oplus^{\infty}_{k=1}\ell^{k}_{2} \right)}$ with $\text{supp} \, y\subset \mathbb{N}\setminus A$, then
\begin{eqnarray*}
\left\| x\pm y\right\|&=&\sup_{j\in\mathbb{N}}\left\{\left(\sum_{i\in I(j), j\in F}\modulo{x_i}^2 \right)^{1/2}, \left(\sum_{i\in I(j), j \in \mathbb{N}\setminus F}\modulo{ y_i}^2 \right)^{1/2}\right\}\nonumber\\
&\leq&1.
\end{eqnarray*}
This implies that $\left\| T(x\pm y)\right\|\leq 1$, for every $y\in B_{c_0\left( \oplus^{\infty}_{k=1}\ell^{k}_{2} \right)}$ with $\text{supp}\, y\subset \mathbb{N}\setminus A$. Notice that, for every $z\in B_{c_0\left( \oplus^{\infty}_{k=1}\ell^{k}_{2} \right)}$, the support of the vector $(I-P_A)(z)$ is a subset of $\mathbb{N}\setminus A$ and then, $\left\| T(x)\pm T(I-P_A)(z)\right\|\leq 1$. Moreover,
\begin{eqnarray*}
\left\| \frac{T(x+ (I-P_A)(z))+T(x-(I-P_A)(z))}{2}\right\|&=&\left\| TP_A(x)\right\|\\
&>&1-\delta(\varepsilon).
\end{eqnarray*}
As  $Y$ is uniformly convex Banach space, we conclude that
\begin{equation*}
\left\| T(x+ (I-P_A)(z))-T(x-(I-P_A)(z))\right\|<\varepsilon.
\end{equation*}
That is, $\Vert T(I-P_A)(z)  \Vert < \frac{\varepsilon}{2}<\varepsilon$, whenever $z\in B_X$.
This shows that $\left\| T(I-P_A)\right\|<\varepsilon$.
\end{proof}

In \cite{AAGM}   the authors proved that the pair $(X, Y) $ has the BPBp for
operators when $X$ and $Y$ are both finite-dimensional. The next proposition claims that the pair $(X,Y)$ has the BPBp for a specific finite dimensional space $X$ and  any  uniformly convex space $Y$.
We remark that Proposition \ref{proposicao4} is  similar  to  Theorem 2.4   in \cite{CK-sum} and the proof goes completely analogously to the one of \cite[Theorem 2.4]{CK-sum}, but we give our version for the readers convenience. We need the next lemma to prove Proposition \ref{proposicao4}. We omit the proof because is just modifications of \cite[Lemma 2.3]{CK-sum}.


\begin{lema}\cite[Lemma 2.3]{CK-sum}\label{Lema23} Let $Y$ be a Banach space and $0<\eta<1$ be given. Assume that $T\in S_{\mathcal{L}\left( \ell_\infty\left(\bigoplus^{n}_{k=1}\ell^{k}_{2}\right), Y\right)}$, $y^*\in S_{Y^*}$ and $x\in S_{\ell_\infty\left(\bigoplus^{n}_{k=1}\ell^{k}_{2}\right)}$ satisfy the estimate $y^*(Tx)=\norma{Tx}>1-\eta$. Then for all $0<\eta'<1$, the sets $N=\left\{k\in \{1,\ldots, n\}:\, \, \sum_{j\in I(k)}\modulo{(T^*y^*)(j)}\neq 0  \right\}$ and
\begin{align*}
A=\left\{k\in N: \, \, \mathrm{Re}\sum_{j\in I(k)}(T^*y^*)(j)x(j)>(1-\eta')\sum_{j\in I(k)}\modulo{(T^*y^*)(j)} \right\}
\end{align*}
satisfy the estimate $\sum_{k\in A}\sum_{j\in I(k)}\modulo{(T^*y^*)(j)}>1-\frac{\eta}{\eta'}$. In particular, 
\begin{align*}
 \mathrm{Re}\sum_{k\in A}\sum_{j\in I(k)}(T^*y^*)(j)x(j)>\left( 1-\frac{\eta}{\eta'}\right)(1-\eta').   
\end{align*}
    
\end{lema}

 To prove the next proposition,  we recall that $NA(X,Y)$ is the subset of $\mathcal{L}(X,Y)$ of all norm attaining operators between $X$ and $Y$.

\begin{prop}\cite[Theorem 2.4]{CK-sum}\label{proposicao4} If $Y$ is a 
uniformly convex Banach space with modulus of convexity $\delta(\varepsilon)$, then   $\left(\ell_\infty\left(\bigoplus^{n}_{k=1}\ell^{k}_{2}\right), Y \right)$ has the Bishop-Phelps-Bollob\'as property for operators. 
\end{prop}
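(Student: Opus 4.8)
The plan is to carry over to this finite $\ell_\infty$-sum of Euclidean spaces the argument used for $(\ell_\infty^{n},Y)$ in \cite{AAGM}, combined with Remark~2.5 of \cite{CK-sum}. Write $X:=\ell_\infty\bigl(\bigoplus_{k=1}^{n}\ell_2^{k}\bigr)$ and note that $X$ is finite dimensional, that $X^{*}=\ell_1\bigl(\bigoplus_{k=1}^{n}\ell_2^{k}\bigr)$, and that every $T\in\ele(X,Y)$ splits as $T=\sum_{k=1}^{n}T_{k}P_{k}$ with $\norma{T_{k}}\le\norma{T}$, where $P_{k}$ is the canonical norm-one projection onto the $k$-th block. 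Given $\varepsilon>0$ I would fix $\varepsilon'\in(0,\varepsilon/4)$ and then $\eta=\eta(\varepsilon)>0$ so small that $\eta<\varepsilon'\,\delta(\varepsilon')$ (with $\delta(\cdot)$ the modulus of convexity of $Y$) and so that the ``extreme point'' estimate below closes. Now let $T\in S_{\ele(X,Y)}$ and $x_0=\sum_{k}x_0^{(k)}\in S_X$ (block decomposition) satisfy $\norma{Tx_0}>1-\eta$. Choose $y^{*}\in S_{Y^{*}}$ with $y^{*}(Tx_0)>1-\eta$ and put $\varphi:=T^{*}y^{*}=(\varphi_k)_{k}\in B_{X^{*}}$, so $\sum_k\norma{\varphi_k}=\norma{\varphi}\le1$ and $\varphi(x_0)=\sum_k\langle\varphi_k,x_0^{(k)}\rangle>1-\eta$. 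Let $A:=\{\,k\le n:\norma{x_0^{(k)}}_2>1-\varepsilon'\,\}$, which is nonempty since it contains the block realising $\norma{x_0}=1$. Writing $y^{*}(TP_{A}x_0)=\varphi(x_0)-\sum_{k\notin A}\langle\varphi_k,x_0^{(k)}\rangle$ and using $\norma{x_0^{(k)}}_2\le1-\varepsilon'$ for $k\notin A$ together with $\sum_k\norma{\varphi_k}\le1$ and $y^{*}(TP_{A}x_0)=\sum_{k\in A}\langle\varphi_k,x_0^{(k)}\rangle\le\sum_{k\in A}\norma{\varphi_k}$, a short computation gives $\norma{TP_{A}}\ge\norma{TP_{A}x_0}>1-\eta/\varepsilon'>1-\delta(\varepsilon')$. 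Hence Lemma~\ref{lema3} yields $\norma{T(I-P_{A})}\le\varepsilon'$, so $T$ is within $\varepsilon'$ of $\widetilde T:=TP_{A}$; passing to $T_1:=\widetilde T/\norma{\widetilde T}$ we obtain $T_1\in S_{\ele(X,Y)}$ with $T_1=T_1P_A$, with $\norma{T_1-T}\le2\varepsilon'$, and with $\norma{T_1 x_0}>1-\delta(\varepsilon')$.

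From here everything happens on the sub-sum $P_{A}(X)=\ell_\infty\bigl(\bigoplus_{k\in A}\ell_2^{k}\bigr)$. Since $\norma{x_0^{(k)}}_2>1-\varepsilon'$ for every $k\in A$, replace $x_0\!\upharpoonright_A$ by the extreme point $\bar x:=\sum_{k\in A}x_0^{(k)}/\norma{x_0^{(k)}}_2$ of $B_{P_A(X)}$, a change of size less than $\varepsilon'$, with $\norma{T_1\bar x}>1-\delta(\varepsilon')-\varepsilon'$ still close to $1$. The key observation is that if we can produce $S_{A}\in S_{\ele(P_A(X),Y)}$ attaining its norm at $\bar x$ with $\norma{S_A-T_1\!\upharpoonright_{P_A(X)}}<\varepsilon/2$, then $S:=S_A\circ P_A\in\ele(X,Y)$ and $u_0:=\bar x+(I-P_A)x_0$ settle the original instance: indeed $\norma{S}=\norma{S_A}=1$, $Su_0=S_A\bar x$ has norm $1=\norma{S}$, $\norma{u_0}=\max(\norma{\bar x},\norma{(I-P_A)x_0})=1$, $\norma{u_0-x_0}=\norma{\bar x-x_0\!\upharpoonright_A}<\varepsilon'<\varepsilon$, and $\norma{S-T}\le\norma{S_A-T_1\!\upharpoonright_{P_A(X)}}+2\varepsilon'<\varepsilon$. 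Thus the whole statement reduces to the following \emph{extreme point case}: $T\in S_{\ele(X,Y)}$, $\bar x$ an extreme point of $B_X$ (so $\norma{\bar x^{(k)}}_2=1$ for all $k$) with $\norma{T\bar x}$ close to $1$; find $S\in S_{\ele(X,Y)}$ with $\norma{S\bar x}=1$ and $\norma{S-T}<\varepsilon$.

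For the extreme point case I would take the rank-one modification $S:=T+g\otimes\bigl(T\bar x/\norma{T\bar x}-T\bar x\bigr)$, where $g:=\frac1n\sum_{k=1}^{n}\langle\bar x^{(k)},\,\cdot\,\rangle\in S_{X^{*}}$ is the \emph{average} of the coordinate functionals attached to $\bar x$. Averaging is what makes $g$ quantitatively expose $\bar x$: since $1-g(x)=\frac1n\sum_k\bigl(1-\langle\bar x^{(k)},x^{(k)}\rangle\bigr)$ has nonnegative summands, any $x\in B_X$ with $g(x)$ close to $1$ satisfies $\norma{x-\bar x}\le\sqrt{2n\,(1-g(x))}$ (and likewise near $-\bar x$ when $g(x)$ is close to $-1$). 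With $u_0:=\bar x$ one gets at once $g(\bar x)=1$, hence $S\bar x=T\bar x/\norma{T\bar x}\in S_Y$, and $\norma{S-T}=1-\norma{T\bar x}$, which is as small as wished.

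The main obstacle is to verify that $S\in S_{\ele(X,Y)}$, i.e.\ that $\norma{S}=1$: the triangle inequality only yields $\norma{S}\le1+(1-\norma{T\bar x})$, and the whole point is that this crude bound is not attained. This is precisely where the uniform convexity of $Y$ is needed. One argues at an extreme point $x^{*}$ of $B_X$ where $\norma{S}=\norma{Sx^{*}}$ is attained, and writes $\norma{Sx^{*}}=\norma{Tx^{*}+cg(x^{*})T\bar x}$ with $c:=\norma{T\bar x}^{-1}-1$. If $g(x^{*})$ is close to $\pm1$, the exposedness estimate puts $x^{*}$ close to $\pm\bar x$, so $\norma{Sx^{*}}$ is close to $\norma{S(\pm\bar x)}=1$. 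If $g(x^{*})$ stays away from $\pm1$, then since $\norma{T}=1$ the modulus of convexity $\delta(\cdot)$ of $Y$ prevents $Tx^{*}$ from being simultaneously nearly as long as possible and nearly aligned with the correction direction $T\bar x$ unless $x^{*}$ is already close to $\bar x$; consequently the correction $cg(x^{*})T\bar x$ points ``inward'' at $Tx^{*}$ and $\norma{Sx^{*}}\le1$. (If one wishes to bypass this geometric step, the same estimates show that, after normalising $S$, the construction can be iterated with errors controlled by $\delta(\cdot)$ and summable, converging to the desired norm-one operator; this is the mechanism of \cite{Kim} and of \cite{CK-sum}.) Unwinding the thresholds $\varepsilon'$ and the smallness conditions on $\eta$ through all of the above then produces the required $\eta(\varepsilon)$.
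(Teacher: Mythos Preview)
The paper's own proof is a one-line reference: carry out Theorem~2.4 of \cite{CK-sum} with the obvious change that the common factor $X$ is replaced by the varying factors $\ell_2^k$ (and the modulus of convexity by the maximum of the finitely many moduli involved). That argument, like the one in \cite{Kim}, is an \emph{iterative} construction: one builds sequences $(T_m)$ and $(x_m)$ with $\norma{T_m x_m}\uparrow 1$, using the uniform convexity of $Y$ at each step to control $\norma{T_{m+1}-T_m}$ and $\norma{x_{m+1}-x_m}$ by a geometric series, and passes to the limit.

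Your reduction to the ``extreme point case'' via Lemma~\ref{lema3} is fine (the lemma is stated for the infinite sum, but its proof works verbatim for $\ell_\infty\bigl(\bigoplus_{k=1}^{n}\ell_2^{k}\bigr)$), and the exposedness estimate $\norma{x-\bar x}\le\sqrt{2n(1-g(x))}$ is correct. The genuine gap is in the extreme point case itself. The one-shot rank-one correction $S=T+g\otimes\bigl(T\bar x/\norma{T\bar x}-T\bar x\bigr)$ does give $S\bar x\in S_Y$ and $\norma{S-T}=1-\norma{T\bar x}$, but the claim $\norma{S}=1$ is not established. Your Case~1 (when $g(x^\ast)$ is near $\pm1$) only yields $\norma{Sx^\ast}\le 1+\norma{S}\sqrt{2n(1-|g(x^\ast)|)}$, which does not close; and your Case~2 asserts that uniform convexity of $Y$ makes the correction ``point inward'', but uniform convexity of $Y$ controls $\norma{Tx^\ast-T\bar x}$ when $\norma{Tx^\ast+T\bar x}$ is large, and from $\norma{T(x^\ast-\bar x)}$ small one cannot conclude $x^\ast$ is near $\bar x$ (the kernel of $T$ may be large). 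In fact for general uniformly convex $Y$ the single rank-one step need not produce a norm-one operator; this is precisely why \cite{Kim} and \cite{CK-sum} iterate. Your parenthetical remark that one can ``normalise and iterate with summable errors'' is the actual substance of the proof, not a bypass of it: the careful choice of the successive thresholds and the verification that the limit operator attains its norm at the limit point are exactly Theorem~2.4 of \cite{CK-sum}, and need to be written out (or at least invoked) rather than gestured at.
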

\begin{proof}Let $0<\varepsilon<1$. We define $\eta(\varepsilon)=\min\left\{ \frac{\varepsilon}{16}, \delta\left(\frac{\varepsilon}{16}\right), \delta_1\left(\frac{\varepsilon}{2}\right), \ldots, \delta_n\left(\frac{\varepsilon}{2}\right) \right\}$, where  $\delta_k(\varepsilon)$ is the modulus of convexity of the spaces $\ell^{k}_{2}$, for all $k=1, \ldots, n$. Let $T \in S_{\mathcal{L}\left(\ell_\infty\left( \bigoplus^{n}_{k=1}\ell^{k}_{2}\right), Y \right)}$ and $x_0=\sum^{n}_{k=1}\sum_{j\in I(k)}x_0(j)e_j\in S_{\ell_\infty\left(\bigoplus^{n}_{k=1}\ell^{k}_{2}\right)}$ such that 
    \begin{equation*}
    \norma{T(x_0)}>1-\frac{\eta(\varepsilon)^6}{64}.
    \end{equation*}
    Choose, $y^{*}_0\in S_{Y^*}$ such that $y^{*}_0(T(x_0))=\norma{T(x_0)}$ and define the subsets 
   \begin{equation*}
   N=\left\{k\in \{ 1,\ldots, n\}: \, \, \sum_{j\in I(k)}\modulo{(T^*y^*_0)(j)}\neq 0 \right\}
   \end{equation*}
   and 
    \begin{equation*}
    A=\left\{k\in N: \, \mathrm{Re}\sum_{j\in I(k)}(T^*y^*_0)(j)x_0(j)> \left(1-\frac{\eta(\varepsilon)^3}{8}\right)\sum_{j\in I(k)}\modulo{(T^*y^*_0)(j)} \right\}.
    \end{equation*}
    According to Lemma \ref{Lema23}, 
 \begin{eqnarray*}
    \norma{TP_A}\geq \norma{TP_A(x_0)}&\geq& \modulo{y^*_0(TP_A(x_0))}\\
    &=&\modulo{T^*y^*_0(P_A(x_0))}\\
    &\geq& \hbox{Re}\,{T^*y^*_0(P_A(x_0))}\\
    &=&\hbox{Re} \sum_{k\in A}\sum_{j\in I(k)}(T^*y^*_0)(j)x_0(j)\\
    &>&\left(1-\frac{\frac{\eta(\varepsilon)^6}{64}}{\frac{\eta(\varepsilon)^3}{8}} \right)\left(1- \frac{\eta(\varepsilon)^3}{8}\right)\\
    &>&1-\delta\left(\frac{\varepsilon}{16}\right).
\end{eqnarray*}
   And then, Lemma \ref{lema3} implies that $\norma{TP_A-T}<\frac{\varepsilon}{16}$. Now, let \\
   $\Tilde{x}_0:=P_A\left(\sum^{n}_{k=1}\frac{\sum_{j\in I(k)}x_0(j)e_j}{\left(\sum_{j\in I(k)}\modulo{x_0(j)}^2 \right)^{1/2}}  \right)$. Then
\begin{eqnarray*}
       \norma{T(\Tilde{x}_0)}&\geq& \modulo{y^*_0(T\Tilde{x}_0)}\\
       &=&\modulo{(T^*y^*_0(\Tilde{x}_0))}\\
       &\geq& \hbox{Re} \sum_{j\in A}\sum_{j\in I(k)}(T^*y^*_0)(j)\Tilde{x}_0(j)\\
       &>&\sum_{j\in A}\frac{1}{\left(\sum_{j\in I(k)}\modulo{x_0(j)}^2 \right)^{1/2}}\left(1- \frac{\eta(\varepsilon)^3}{8} \right)\sum_{j\in I(k)}\modulo{(T^*y^*_0)(j)}\\
       &\geq& \left(1- \frac{\eta(\varepsilon)^3}{8} \right)\sum_{j\in A}\sum_{j\in I(k)}\modulo{(T^*y^*_0)(j)}\\
       &>&\left(1- \frac{\eta(\varepsilon)^3}{8} \right)\left(1- \frac{\eta(\varepsilon)^3}{8} \right)\\
       &>&1-\frac{\eta(\varepsilon)^3}{4},
   \end{eqnarray*}
   and
   \begin{eqnarray*}
        \left( \sum_{j\in I(k)}\modulo{\Tilde{x}_0(j)-x_0(j)}^2\right)^{1/2}&=&\norma{\frac{\sum_{j\in I(k)}x_0(j)e_j}{\left(\sum_{j\in I(k)}\modulo{x_0(j)}^2 \right)^{1/2}}-\sum_{j\in I(k)}x_0(j)e_j}_2\\
        &=&\modulo{1-\left(\sum_{j\in I(k)}\modulo{x_0(j)}^2 \right)^{1/2}}\\
       &<&\frac{\eta(\varepsilon)^3}{8}, \quad \hbox{for all}\, \, k\in A.
   \end{eqnarray*}
 Choose $y^*_1\in S_Y^*$ such that $y^*_1(T(\Tilde{x}_0))=\norma{T(\Tilde{x}_0)}$. Let $R:P_A\left(\ell_\infty\left(\bigoplus^{n}_{k=1}\ell^{k}_{2}\right)\right)\rightarrow Y$ be the linear bounded operator defined by
 \begin{equation*}
R(z)=TP_A(z)+\eta(\varepsilon)y^*_1(TP_A(z))\frac{T(\Tilde{x}_0)}{\norma{T(\Tilde{x}_0)}}.
 \end{equation*}
As  $P_A\left(\ell_\infty\left(\bigoplus^{n}_{k=1}\ell^{k}_{2}\right)\right)$ is finite dimensional, $\overline{NA\left(P_A\left(\ell_\infty\left(\bigoplus^{n}_{k=1}\ell^{k}_{2}\right)\right), Y\right)}= \\\mathcal{L}\left(P_A\left(\ell_\infty\left(\bigoplus^{n}_{k=1}\ell^{k}_{2}\right)\right), Y\right)$. Then, there is $Q\in NA\left(P_A\left(\ell_\infty\left(\bigoplus^{n}_{k=1}\ell^{k}_{2}\right)\right), Y\right)$ such that $\norma{Q-R}<\frac{\eta(\varepsilon)^3}{4}$. That is, there exist $w_0\in S_{P_A\left(\ell_\infty\left(\bigoplus^{n}_{k=1}\ell^{k}_{2}\right)\right)}$ such that $\norma{Q(w_0)}=\norma{Q}$, $\norma{Q}=\norma{R}$ and $\norma{Q-R}<\frac{\eta(\varepsilon)^3}{4}$. Furthermore, we obtain the following estimate for $\norma{R(\Tilde{x}_0)}$
\begin{eqnarray*}
    \norma{R(\Tilde{x}_0)}&=&\norma{TP_A(\Tilde{x}_0)+\eta(\varepsilon)y^*_1(TP_A(\Tilde{x}_0))\frac{T(\Tilde{x}_0)}{\norma{T(\Tilde{x}_0)}}}\\
    &=&\norma{T(\Tilde{x}_0)+\eta(\varepsilon)y^*_1(T(\Tilde{x}_0))\frac{T(\Tilde{x}_0)}{\norma{T(\Tilde{x}_0)}}}\\
    &=&\norma{T(\Tilde{x}_0)+\eta(\varepsilon)\norma{T(\Tilde{x}_0)}\frac{T(\Tilde{x}_0)}{\norma{T(\Tilde{x}_0)}}}\\
    &=&\norma{T(\Tilde{x}_0)}(1+\eta(\varepsilon))\\
    &>&\left(1-\frac{\eta(\varepsilon)^3}{4} \right)(1+\eta(\varepsilon))\\
    &=&1-\frac{\eta(\varepsilon)^3}{4}+\eta(\varepsilon)\left(1-\frac{\eta(\varepsilon)^3}{4} \right)
\end{eqnarray*}
and 
\begin{eqnarray*}
    \norma{R(\Tilde{x}_0)}\leq \norma{R}&=&\norma{Q}\\
    &=&\norma{Q(w_0)}\\
    &\leq& \norma{Q(w_0)-R(w_0)}+\norma{R(w_0)}\\
    &<& \frac{\eta(\varepsilon)^3}{4}+1+\eta(\varepsilon)\modulo{y^*_1(T(w_0))}.
\end{eqnarray*}    
By composing with an isometry on $P_A\left(\ell_\infty\left(\bigoplus^{n}_{k=1}\ell^{k}_{2}\right)\right)$ if necessary, we may assume that $\modulo{y^*_1(T(w_0))}=\hbox{Re}\,y^*_1(T(w_0))$. Combining the estimates obtained, we see that $\hbox{Re}\,y^*_1(T(w_0))> 1-\eta(\varepsilon)^2$. Thus,
\begin{eqnarray*}
    \hbox{Re} \, y^*_1\left( T\left( \frac{w_0+\Tilde{x_0}}{2}\right) \right)&>&\frac{1}{2}\left(1-\eta(\varepsilon)^2+1-\frac{\eta(\varepsilon)^3}{4} \right)\\
    &=&1-\frac{\eta(\varepsilon)^2+\frac{\eta(\varepsilon)^3}{4}}{2}\\
    &\geq& 1-\eta(\varepsilon)^2.
\end{eqnarray*}
Now, we define the subset 
\begin{align*}
   B=\left\{k\in A: \, 
\hbox{Re}\sum_{j\in I(k)}(T^*y^*_1)(j)\left(\frac{w_0(j)+\Tilde{x_0}(j)}{2}\right) >  \left(1-\eta(\varepsilon)\right)
\sum_{j\in I(k)}\modulo{(T^*y^*_1)(j)} \right\}. 
\end{align*}

Applying Lemma \ref{Lema23} we see that
\begin{eqnarray*}
    \norma{TP_B}&\geq& \norma{TP_B\left(\frac{w_0+\Tilde{x}_0}{2}\right)}\\
    &\geq& \modulo{y^*_1\left(TP_B\left(\frac{w_0+\Tilde{x}_0}{2}\right)\right)}\\
    &=&\modulo{T^*y^*_1\left(P_B\left(\frac{w_0+\Tilde{x}_0}{2}\right)\right)}\\
    &\geq& \hbox{Re} \,{T^*y^*_1\left(P_B\left(\frac{w_0+\Tilde{x}_0}{2}\right)\right)}\\
    &=&\hbox{Re}\sum_{j\in B}\sum_{j\in I(k)}(T^*y^*_1)(j)\left(\frac{w_0(j)+\Tilde{x_0}(j)}{2}\right)\\
    &>&\left(1-\frac{\eta(\varepsilon)^2}{\eta(\varepsilon)} \right)\left(1- \eta(\varepsilon)\right)\\
    &>&1-\delta\left(\frac{\varepsilon}{16}\right).
\end{eqnarray*}
Then, Lemma \ref{lema3} implies that $\norma{TP_B-T}<\frac{\varepsilon}{16}$. Further, for each $k\in B$
\begin{eqnarray*}
    1-\eta(\varepsilon)&<& \hbox{Re}\sum_{j\in I(k)}\frac{(T^*y^*_1)(j)}{\sum_{j\in I(k)}\modulo{(T^*y^*_0)(j)}}\left(\frac{w_0(j)+\Tilde{x}_0(j)}{2}\right)\\
    &\leq&\norma{\sum_{j\in I(k)}\frac{w_0(j)+\Tilde{x}_0(j)}{2}e_j}_2.
   \end{eqnarray*}
That is, $1-\delta_k\left(\frac{\varepsilon}{2} \right)<\norma{\sum_{j\in I(k)}\frac{w_0(j)+\Tilde{x}_0(j)}{2}e_j}_2$, for every $k\in B$. As $\ell^{k}_2$ is uniformly convex with modulus of convexity $\delta_k$,
then 
\begin{equation*}
   \norma{\sum_{j\in I(k)}(w_0(j)-\Tilde{x}_0(j))e_j}_2<\frac{\varepsilon}{2}, \, \, \text{for every} \, \, k\in B.
\end{equation*}
 Now, we define the linear bounded operator $\Tilde{S}: P_A\left(\ell_\infty\left(\bigoplus^{n}_{k=1}\ell^{k}_{2}\right)\right)\rightarrow Y$ by $\Tilde{S}(z)=QP_B(z)+Q(I-P_B)U(z)$, where $U\in B_{\mathcal{L}\left(P_A\left(\ell_\infty\left(\bigoplus^{n}_{k=1}\ell^{k}_{2}\right)\right), P_A\left(\ell_\infty\left(\bigoplus^{n}_{k=1}\ell^{k}_{2}\right)\right) \right)}$ is chosen such that $U\left(E_k\left(\sum_{j\in I(k)}\Tilde{x}_0(j)e_j\right)\right)=E_k\left(\sum_{j\in I(k)}w_0(j)e_j\right)$ for every $k\in A$ and $E_k:\ell^k_2\rightarrow P_A\left(\ell_\infty\left(\bigoplus^{n}_{k=1}\ell^{k}_{2}\right)\right)$ is the $k^{th}$ injection map. Moreover, for each $z=\sum_{k\in A}\sum_{j\in I(k)}z_je_j\in S_{P_A\left(\ell_\infty\left(\bigoplus^{n}_{k=1}\ell^{k}_{2}\right)\right)}$ 
\begin{eqnarray*}
    \rVert\Tilde{S}(z)\lVert=\norma{Q\left(\sum_{k\in B}\sum_{j\in I(k)}z_je_j+\sum_{k\in A\setminus B}\sum_{j\in I(k)}z_jU(e_j) \right)}\leq \norma{Q},
\end{eqnarray*}
so, $\rVert\Tilde{S}\lVert \leq\rVert Q\lVert$. Let $S: \ell_\infty\left(\bigoplus^{n}_{k=1}\ell^{k}_{2}\right) \rightarrow Y$ be the canonical extension of $\frac{\Tilde{S}}{\rVert\Tilde{S}\lVert}$ and define $z_0=\sum_{k\in B}\sum_{j\in I(k)}w_0(j)e_j+\sum_{k\in A\setminus B}\sum_{j\in I(k)}\Tilde{x}_0(j)e_j+\sum_{k\in \{1,\ldots, n\}\setminus A}\sum_{j\in I(k)}x_0(j)e_j\in S_{\ell_\infty\left(\bigoplus^{n}_{k=1}\ell^{k}_{2}\right)}$.
Then
\begin{eqnarray*}
    1&\geq& \norma{S(z_0)}\\
    &=&\frac{1}{\rVert\Tilde{S}\lVert}\norma{\Tilde{S}\left( \sum_{k\in B}\sum_{j\in I(k)}w_0(j)e_j+\sum_{k\in A\setminus B}\sum_{j\in I(k)}\Tilde{x}_0(j)e_j\right)}\\
    &\geq& \frac{1}{\norma{Q}}\norma{Q\left( \sum_{k\in B}\sum_{j\in I(k)}w_0(j)e_j+\sum_{k\in A\setminus B}\sum_{j\in I(k)}w_0(j)e_j\right)}\\
    &=& \frac{\norma{Q(w_0)}}{\norma{Q}}=1.
\end{eqnarray*}
Thus, $S$ attains its norm at $z_0$. Furthermore, if $k\in B$ then
\begin{eqnarray*}
    \left( \sum_{j\in I(k)}\modulo{z_0(j)-x_0(j)}^2\right)^{1/2}&\leq&  \left( \sum_{j\in I(k)}\modulo{w_0(j)-\Tilde{x}_0(j)}^2\right)^{1/2}\\
    &&+ \left( \sum_{j\in I(k)}\modulo{\Tilde{x}_0(j)-x_0(j)}^2\right)^{1/2}\\
    &<&\frac{\varepsilon}{2}+\frac{\eta(\varepsilon)^3}{8}.
\end{eqnarray*}
If $k\in A\setminus B$ then
\begin{eqnarray*}
    \left( \sum_{j\in I(k)}\modulo{z_0(j)-x_0(j)}^2\right)^{1/2}&=& \left( \sum_{j\in I(k)}\modulo{\Tilde{x}_0(j)-x_0(j)}^2\right)^{1/2}\\
    &<&\frac{\eta(\varepsilon)^3}{8}.
\end{eqnarray*}
And, if $k\in \{1,\ldots, n\}\setminus A$ then
$\left( \sum_{j\in I(k)}\modulo{z_0(j)-x_0(j)}^2\right)^{1/2} = 0$. Thus,
\begin{eqnarray*}
    \norma{z_0-x_0}&=&\max\left\{ \left( \sum_{j\in I(k)}\modulo{z_0(j)-x_0(j)}^2\right)^{1/2}: \, k\in \{1,\ldots, n\}\right\}\\
    &<& \frac{\varepsilon}{2}+\frac{\eta(\varepsilon)^3}{8}<\varepsilon.
\end{eqnarray*}
Finally,
\begin{eqnarray*}
    \norma{S-T}&\leq&\norma{S-TP_A}+\norma{TP_A-T}\\
    &=&\norma{\frac{\Tilde{S}}{\rVert\Tilde{S}\lVert}-TP_A}+\norma{TP_A-T}\\
    &\leq&\norma{\frac{\Tilde{S}}{\rVert\Tilde{S}\lVert}-\Tilde{S}}+\norma{\Tilde{S}-Q}+\norma{Q-R}\\
    &&+\norma{R-TP_A}+\norma{TP_A-T}\\
    &<&\modulo{1-\rVert\Tilde{S}\lVert}+\norma{Q(P_B-I)+Q(I-P_B)U}\\
    && +\frac{\eta(\varepsilon)^3}{4}+\eta(\varepsilon)+\frac{\varepsilon}{16}\\
    &\leq& \modulo{1-\norma{R}}+2\norma{Q(I-P_B)} +\frac{\eta(\varepsilon)^3}{4}\\
    && +\eta(\varepsilon)+\frac{\varepsilon}{16}\\
    &<&\frac{\varepsilon}{16}+\eta(\varepsilon)+2\norma{TP_A-Q}+2\norma{TP_A(I-P_B)}\\
    &&+\frac{\eta(\varepsilon)^3}{4}+\eta(\varepsilon)+\frac{\varepsilon}{16}\\
    &=&\frac{\varepsilon}{16}+\eta(\varepsilon)+\frac{\varepsilon}{8}+3\left( \frac{\eta(\varepsilon)^3}{4}+\eta(\varepsilon)\right)+\frac{\varepsilon}{16}<\varepsilon,
    \end{eqnarray*}
and the proof is complete.
\end{proof}

\begin{teo}\label{tbpbp}If $Y$ is a uniformly convex Banach space, then  $(c_0\left( \oplus^{\infty}_{k=1}\ell^{k}_{2} \right), Y )$ has the Bishop-Phelps-Bollob\'as property for operators.  
\end{teo}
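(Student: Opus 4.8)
The plan is to establish the BPBp for the pair $(c_0(\oplus_{k=1}^\infty \ell_2^k), Y)$ by a finite-dimensional reduction combined with Lemma \ref{lema3}, following the strategy used by Kim \cite{Kim} for $(c_0, Y)$ and adapted to the present $c_0$-sum. Write $X = c_0(\oplus_{k=1}^\infty \ell_2^k)$, and for a finite set $F \subset \mathbb{N}$ put $A = A_F = \bigcup_{i \in F} I(i)$ and let $P_A$ be the natural coordinate projection onto $\bigoplus_{i \in F} \ell_2^i$, so that $P_A(X)$ is isometric to $\ell_\infty(\oplus_{i \in F} \ell_2^i)$, a finite-dimensional space of the type handled in Proposition \ref{proposicao4}. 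The point of Lemma \ref{lema3} is that an operator of norm one which is "almost norm-attaining" essentially lives on such a finite block: if $T \in S_{\mathcal{L}(X,Y)}$ and $\|T x_0\| > 1 - \eta$ for some $x_0 \in S_X$, then since $x_0 \in c_0(\oplus \ell_2^k)$ only finitely many blocks of $x_0$ have norm close to $1$, and combined with a suitable choice of $F$ one gets $\|T P_A\| > 1 - \delta(\epsilon')$ for an appropriate $\epsilon'$, hence $\|T(I - P_A)\| \le \epsilon'$ by Lemma \ref{lema3}.

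Concretely, given $\e > 0$ I would fix the modulus of convexity $\delta$ of $Y$, choose auxiliary parameters (something like $\epsilon_1 \ll \e$ and then $\eta(\e)$ small relative to $\delta(\epsilon_1)$ and to the BPBp-function $\eta_0$ supplied by Proposition \ref{proposicao4} for the finite sum), and proceed as follows. Start with $T \in S_{\mathcal{L}(X,Y)}$ and $x_0 \in S_X$ with $\|T x_0\| > 1 - \eta(\e)$. Using $x_0 \in X$, pick a finite $F$ with $\bigl(\sum_{i \in I(j)} |(x_0)_i|^2\bigr)^{1/2}$ small for $j \notin F$; then $\|x_0 - P_A x_0\|$ is small, so $\|T P_A x_0\|$ is still close to $1$, giving $\|T P_A\| > 1 - \delta(\epsilon_1)$. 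By Lemma \ref{lema3}, $\|T(I - P_A)\| \le \epsilon_1$. Now consider the restriction $T|_{P_A(X)} \colon P_A(X) \to Y$; after renormalizing it has norm close to $1$ and almost attains its norm at (the normalization of) $P_A x_0$, so Proposition \ref{proposicao4} yields $S_0 \in S_{\mathcal{L}(P_A(X), Y)}$ and $u_0 \in S_{P_A(X)}$ with $\|S_0 u_0\| = 1$, $\|u_0 - \widetilde{P_A x_0}\| < \e/2$, $\|S_0 - \widetilde{T|_{P_A(X)}}\| < \e/2$. Finally extend $S_0$ to all of $X$ by $S := S_0 \circ P_A$ (so $S$ kills $I - P_A$), check that $S \in S_{\mathcal{L}(X,Y)}$ and that $u_0$, regarded as an element of $S_X$, satisfies $\|S u_0\| = \|S_0 u_0\| = 1$; then control $\|S - T\| \le \|S_0 P_A - (T|_{P_A(X)}) P_A\| + \|T P_A - T\| \le \|S_0 - \widetilde{T|_{P_A(X)}}\|\cdot(\text{norm correction}) + \epsilon_1$ and $\|u_0 - x_0\| \le \|u_0 - \widetilde{P_A x_0}\| + \|\widetilde{P_A x_0} - P_A x_0\| + \|P_A x_0 - x_0\|$, making each piece $< \e$ by the initial choices.

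The main obstacle — and the place where the bookkeeping has to be done carefully — is the normalization: the finite-dimensional restriction $T|_{P_A(X)}$ has norm strictly less than one in general (though $> 1 - \delta(\epsilon_1)$), so one must divide by $\|T|_{P_A(X)}\|$ before invoking Proposition \ref{proposicao4}, and then track how this rescaling, together with the extension-by-$P_A$ step, perturbs the operator-norm estimate $\|S - T\| < \e$. One must also ensure that the point $u_0$ produced in the finite-dimensional space, when viewed back in $S_X$, is genuinely close to $x_0$ and that $S$ still has norm exactly $1$ (it has norm $\le 1$ since $\|P_A\| = 1$, and norm $\ge \|S u_0\| = 1$). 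A secondary technical point is verifying that $x_0$ really can be assumed to have only finitely many large blocks uniformly — this is exactly where the $c_0$-structure (as opposed to $\ell_\infty$) of the sum is essential, and it is the analogue of the corresponding step in \cite{Kim}. Once these normalization constants are chosen in the right order, the conclusion follows, and $\eta(\e)$ can be read off explicitly in terms of $\delta$ and the finite-dimensional BPBp-function from Proposition \ref{proposicao4}.

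\begin{proof}
See the discussion above; the details are a routine adaptation of \cite{Kim} and \cite{CK-sum} using Lemma \ref{lema3} and Proposition \ref{proposicao4}.
\end{proof}
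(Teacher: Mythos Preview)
Your proposal is correct and follows essentially the same route as the paper: approximate $x_0$ by a finitely supported vector to isolate a finite block $A$, use Lemma~\ref{lema3} to make $\|T(I-P_A)\|$ small, normalize the restriction $TP_A$ and apply Proposition~\ref{proposicao4} on the finite sum $\ell_\infty(\oplus_{k=1}^n \ell_2^k)$, then extend the resulting operator back by composing with $P_A$ and assemble the three-term estimates for $\|S-T\|$ and $\|u_0-x_0\|$. The only cosmetic difference is that the paper takes the new point $v$ to agree with $x$ on the coordinates outside $A$ rather than setting them to zero, but since $S$ kills $(I-P_A)$ this changes nothing in the argument.
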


\begin{proof}
Given $0<\varepsilon<1$, choose $\eta(\varepsilon)>0$ the positive number in Proposition \ref{proposicao4}. Assume that $T\in S_{\mathcal{L}\left(c_0\left( \oplus^{\infty}_{k=1}\ell^{k}_{2} \right),Y\right)}$ and $x=\sum^{\infty}_{n=1}\sum_{k\in I(n)}x_ke_k\in S_{c_0\left( \oplus^{\infty}_{k=1}\ell^{k}_{2} \right)}$ satisfy $\norma{Tx}>1-\eta(\varepsilon)^2$ and also $\norma{Tx}>1-\delta(\varepsilon)$, where $\delta(\varepsilon)>0$ is the modulus of convexity of $Y$. Since $c_{00}$ is a dense subspace of $c_0\left( \oplus^{\infty}_{k=1}\ell^{k}_{2} \right)$, we
 can choose a vector $u\in S_{c_0\left( \oplus^{\infty}_{k=1}\ell^{k}_{2} \right)}$ with finite support such that $\norma{T(u)}>1-\eta(\varepsilon)^2$, $\norma{T(u)}>1-\delta(\varepsilon)$ and  $\|x-u\|<\varepsilon$. We define $n=\min\{k\in\mathbb{N}: \text{supp}\, u\subset \bigcup^{k}_{j=1}I(j) \}$, and $A=\bigcup^{n}_{k=1}I(k)$. Thus, $\|TP_A\|\geq \|TP_A(u)\|= \|T(u)\|>1-\delta(\varepsilon)$ and $\|TP_A\|>1-\eta(\varepsilon)^2$. According to the   Lemma \ref{lema3}, $\|T(I-P_A)\|\leq \varepsilon$. Now, let $J:\ell_{\infty}\left(\bigoplus^{n}_{k=1}\ell^{k}_{2}\right)\rightarrow c_0\left( \oplus^{\infty}_{k=1}\ell^{k}_{2} \right)$ be the map defined by
\begin{eqnarray*}
 J(w)=\left\{
\begin{array}{ll}
w_j,  &\text{if} \,\, j\in A, \\
0,   &\text{if} \,\, j\in \mathbb{N}\setminus A.
\end{array}
\right.
\end{eqnarray*} 
Then, $\norma{J(w)} =\max_{1\leq k\leq n}\left(\sum_{j\in I(k)}\modulo{w_j}^2\right)^{1/2}=\norma{w}$, for all $w\in \ell_{\infty}\left(\bigoplus^{n}_{k=1}\ell^{k}_{2}\right)$. Let $Q:\ell_{\infty}\left(\bigoplus^{n}_{k=1}\ell^{k}_{2}\right)\rightarrow Y$ be the bounded linear operator defined by $Q(w)=\frac{TP_AJ}{\norma{TP_AJ}}(w)$ and
the vector $z=(z_j)\in \ell_{\infty}\left(\bigoplus^{n}_{k=1}\ell^{k}_{2}\right)$ given by $z_j=u_j$, if $j\in \text{supp}\, u$ and $z_j=0$ if $j\in A\setminus \text{supp}\, u$. It  is  easy to check that $\left\|Q\right\|=\left\|z\right\|=1$. As  $\norma{TP_AJ}\leq 1$, then 
\begin{equation}
\norma{Q(z)}=\norma{\frac{TP_AJ}{\norma{TP_AJ}}(z)}\geq \norma{TP_A(u)}=\norma{T(u)},
\end{equation}
 and thus,  $\Vert Q(z)\Vert >1-\eta(\varepsilon)$. By  Proposition \ref{proposicao4}  the pair  $\left(\ell_{\infty}\left(\bigoplus^{n}_{k=1}\ell^{k}_{2}\right),Y\right)$ has the BPBp for operators, then there are $\beta(\varepsilon)>0$ with $\lim_{t\rightarrow 0}\beta(t)=0$, $\Tilde{R}\in S_{\mathcal{L}\left(\ell_{\infty}\left(\bigoplus^{n}_{k=1}\ell^{k}_{2}\right), Y\right)}$ and $\Tilde{u}\in S_{\ell_{\infty}\left(\bigoplus^{n}_{k=1}\ell^{k}_{2}\right)}$, such that 
\begin{eqnarray}\label{bib}
\|\Tilde{R}(\Tilde{u})\|=1, \quad  \|z-\Tilde{u}\|<\beta(\varepsilon) \quad \text{and} \quad \lVert\Tilde{R}-Q\rVert<\varepsilon.
\end{eqnarray}
Let  $(e_j)$, $(f_j)$ be the canonical basis of $c_0\left( \oplus^{\infty}_{k=1}\ell^{k}_{2} \right)$ and $\ell_{\infty}\left(\bigoplus^{n}_{k=1}\ell^{k}_{2}\right)$, respectively, and  $R:c_0\left( \oplus^{\infty}_{k=1}\ell^{k}_{2} \right)\longrightarrow Y$ be the bounded linear operator given by
 \begin{equation*}
 R(y)= \sum^{\infty}_{k=1}\sum_{j\in I(k)}y_jR(e_j),    
 \end{equation*}
 where
\begin{eqnarray*}
 R(e_j)=\left\{
\begin{array}{ll}
 \Tilde{R}(f_j),  &\text{if}  \,\, j\in A, \\
0,   &\text{if} \,\, j\in \mathbb{N}\setminus A.
\end{array}
\right.
\end{eqnarray*} 
Moreover, consider the vector $v=(v_j)\in c_0\left( \oplus^{\infty}_{k=1}\ell^{k}_{2} \right)$ defined by
\begin{eqnarray*}
 v_j=\left\{
\begin{array}{ll}
\Tilde{u}_j, &\text{if}  \,\, j\in A, \\
x_j,  &\text{if} \,\, j\in \mathbb{N}\setminus A.
\end{array}
\right.
\end{eqnarray*} 
So  $R\in S_{\mathcal{L}\left(c_0\left( \oplus^{\infty}_{k=1}\ell^{k}_{2} \right),Y\right)}$, $v\in S_{c_0\left( \oplus^{\infty}_{k=1}\ell^{k}_{2} \right)}$ and  
$\|R(v)\|=\|\Tilde{R}(\Tilde{u})\|=1.$

It follows that $R$ attains its norm at $v$. Next we will show that $\Vert R -  T\Vert  < \varepsilon.$ We have
\begin{eqnarray}
\left\|R-T \right\|&
\leq&\displaystyle \left\|R-\frac{TP_A}{\|TP_A\|}\right\|+\left\|\frac{TP_A}{\|TP_A\|}-TP_A\right\|+\left\|TP_A-T \right\|\nonumber\\
&=&\displaystyle \left\|\Tilde{R}-\frac{TP_AJ}{\|TP_AJ\|}\right\|+\left\|\frac{TP_A}{\|TP_A\|}-TP_A\right\|+\left\|TP_A-T \right\|\nonumber\\
&=&\left\|\Tilde{R}-\frac{TP_AJ}{\|TP_AJ\|}\right\|+||TP_A|| \modulo{\frac{1}{||TP_A||}-1}+\left\|TP_A-T \right\|\nonumber\\
&=&\displaystyle \lVert\Tilde{R}-Q\rVert+|1-||TP_A|||+\left\|TP_A-T \right\|\nonumber\\
&<&\displaystyle \varepsilon+1-1+\eta(\varepsilon)^2+\varepsilon\nonumber <3\varepsilon.
\end{eqnarray}
Finally, we show that the vectors $x$ and $v$ are close. Indeed,  
\begin{eqnarray*}
\left\|v-x \right\|=\left\|P_A(v-x) \right\|&=&\max_{1\leq k\leq n}\left(\sum_{j\in I(k)}\modulo{v_j-x_j}^2)\right)^{1/2}\nonumber\\ 
&=&\max_{1\leq k\leq n}\left(\sum_{j\in I(k)}\modulo{\Tilde{u}_j-x_j}^2)\right)^{1/2}\nonumber\\ 
&\leq& \max_{1\leq k\leq n}\left(\sum_{j\in I(k)}\modulo{\Tilde{u}_j-u_j}^2)\right)^{1/2}\\
&&+\max_{1\leq k\leq n}\left(\sum_{j\in I(k)}\modulo{u_j-x_j}^2)\right)^{1/2}\nonumber\\
&\leq&\norma{\Tilde{u}-z}+\norma{u-x}\nonumber < \beta(\varepsilon)+\varepsilon,
\end{eqnarray*}
where $\lim_{t\rightarrow 0}(\beta(t)+t)=0$. Therefore   $\left(c_0\left( \oplus^{\infty}_{k=1}\ell^{k}_{2} \right),Y\right)$ has the BPBp for operators.
\end{proof}


\bigskip

\noindent Department of Mathematics\\
Midwestern Paraná State University\\
85.040-167 -- Guarapuava -- PR -- Brazil\\
e-mail: tgrando@unicentro.br

\

\noindent Department of Mathematics\\
University of S\~ao Paulo\\
05315-970 -- S\~ao Paulo -- Brazil.\\
e-mail: mllouren@ime.usp.br

\end{document}